\theoremstyle{plain}
\newtheorem{Thm}{Theorem}[section]
\newtheorem{Prop}[Thm]{Proposition}
\newtheorem{Cor}[Thm]{Corollary}
\newtheorem{Lem}[Thm]{Lemma}
\theoremstyle{definition}
\newtheorem{Rem}[Thm]{Remark}
\numberwithin{equation}{section}
\title{On the derived category of a weighted projective threefold}
\author{Yujiro Kawamata \\ \\
{\it dedicated to Professor Fabrizio Catanese for his seventieth birthday}}
\begin{document}
\maketitle

\begin{abstract}
We calculate a semi-orthogonal decomposition of the bounded derived category of coherent sheaves on 
$\mathbf{P}(1,1,1,3)$ using a tilting bundle.
\end{abstract}

%14F05, 14E16.

%\tableofcontents

\section{Introduction}
%%%%%%%%%%%%%%%%%%%%%%%%
%%%%%%%%%%%%%%%%%%%%%%%%
%%%%%%%%%%%%%%%%%%%%%%%%

Let us consider a bounded derived category of coherent sheaves $D^b(S)$ for a weighted projective surface 
$S = \mathbf{P}(1,1,2)$.
It  is generated by coherent sheaves $\mathcal{O}_S(-2), \mathcal{O}_S(-1), \mathcal{O}_S$, where 
$\mathcal{O}_S(-1)$ is a reflexive sheaf of rank $1$ which is not invertible.
But there is a locally free sheaf $F_S$ of rank $2$ on $S$ sitting in an exact sequence
\[
0 \to \mathcal{O}_S(-1) \to F_S \to \mathcal{O}_S(-1) \to 0
\]
such that $\mathcal{O}_S(-2),F_S,\mathcal{O}_S$ generate $D^b(S)$, and such that 
$\text{End}(F_S) \cong k[t]/(t^2)$ and $\text{Ext}^i(F_S,F_S) \cong 0$ for $i > 0$.
In this way we obtain a semi-orthogonal decomposition
\[
D^b(S) = \langle \mathcal{O}_S(-2),F_S,\mathcal{O}_S \rangle \cong \langle D^b(k), D^b(k[t]/(t^2)), D^b(k) \rangle.
\]
This kind of phenomena is greatly generalized in \cite{KKS} to surfaces with cyclic quotient singularities
and to $3$-dimensional varieties in \cite{KPS}.
But the generalizations in dimension $3$ are mostly concerned only with the case of varieties having hypersurface singularities, 
especially ordinary double points.
In this article we would like to consider higher dimensional varieties with toric singularities taking an example of 
a weighted projective space.

The purpose of this short article is to provide an example which shows that a similar construction of a semi-orthogonal decomposition
of a bounded derived category is possible for a higher dimensional variety with a cyclic quotient singularity.
This work is inspired by talks by Professors Martin Kalck and Evgeny Shinder in a Zoom conference
\lq\lq Categories and Birational Geometry'' held in December 2020 
(http://www.mathnet.ru/php/conference.\break phtml?confid=1835\&option\_lang=eng).
The author would like to thank them for the comments on the first version of the article.

\begin{Thm}
Let $X = \mathbf{P}(1,1,1,3)$ be a weighted projective space of dimension $3$.
Then there exist locally free sheaves $F$ and $G$ of ranks $3$ and $6$, respectively, 
which satisfy the following conditions:

(1) $\text{Ext}^i(F \oplus G,F \oplus G) \cong 0$ for $i > 0$.

(2) There is a semi-orthogonal decomposition
\[
D^b(X) = \langle F \oplus G,\mathcal{O}_X, \mathcal{O}_X(3) \rangle \cong \langle D^b(R_X), D^b(k), D^b(k) \rangle
\]
where $R_X = \text{End}(F \oplus G)$ with $\dim R_X = 45$.
\end{Thm}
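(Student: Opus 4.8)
The plan is to exhibit a tilting bundle $T=(F\oplus G)\oplus\mathcal{O}_X\oplus\mathcal{O}_X(3)$ on $X$, deduce an equivalence $D^b(X)\cong D^b(\operatorname{End}(T))$ from the tilting theorem, and then read off the semi-orthogonal decomposition from the block-triangular shape of $\operatorname{End}(T)$. Two computations will be used throughout. Writing $X=\mathbf{P}(1,1,1,3)$ with homogeneous coordinate ring $S=k[x_0,x_1,x_2,x_3]$, $\deg x_i=1$ for $i\le2$ and $\deg x_3=3$, one has $H^0(\mathcal{O}_X(m))=S_m$, $H^1(\mathcal{O}_X(m))=H^2(\mathcal{O}_X(m))=0$, and $H^3(\mathcal{O}_X(m))\cong S_{-6-m}^{\vee}$ by Serre duality against $\omega_X=\mathcal{O}_X(-6)$; in particular $H^\bullet(\mathcal{O}_X(m))=0$ exactly when $-6<m<0$. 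Moreover the Koszul complex on $x_0,x_1,x_2,x_3$ has homology only along the irrelevant locus, so it sheafifies on $X$, and twisting by $\mathcal{O}_X(a)$ gives exact sequences of reflexive sheaves
\[
0\to\mathcal{O}_X(a-6)\to\mathcal{O}_X(a-5)^{3}\oplus\mathcal{O}_X(a-3)\to\mathcal{O}_X(a-4)^{3}\oplus\mathcal{O}_X(a-2)^{3}\to\mathcal{O}_X(a-3)\oplus\mathcal{O}_X(a-1)^{3}\to\mathcal{O}_X(a)\to0 .
\]
These show that $D^b(X)$ is generated, as a triangulated category, by any six consecutive twists, for instance $\mathcal{O}_X(-2),\dots,\mathcal{O}_X(3)$. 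Among these, the $\mathcal{O}_X(i)$ with $3\nmid i$ are reflexive but not locally free at the cyclic quotient point $p=[0:0:0:1]$ of type $\tfrac13(1,1,1)$, and are not exceptional objects of $D^b(X)$; replacing them by vector bundles is exactly the phenomenon of the surface example in the introduction.

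First I would construct $F$ and $G$. The degree-one and degree-two parts of $S$ give Euler-type sequences
\[
0\to K\to\mathcal{O}_X^{3}\xrightarrow{(x_0,x_1,x_2)}\mathcal{O}_X(1)\to0,\qquad 0\to K'\to\mathcal{O}_X^{6}\xrightarrow{(x_ix_j)}\mathcal{O}_X(2)\to0,
\]
with $K$ of rank $2$ and $K'$ of rank $5$, both reflexive and locally free away from $p$; a direct diagram chase gives $H^\bullet(K)=H^\bullet(K(-3))=0$, and likewise for $K'$. I would then take $F$, of rank $3$, to be the locally free sheaf obtained as a suitable non-split extension built from $\mathcal{O}_X(-1)$ and $K$ (note $3=1+2$), and $G$, of rank $6$, the analogous extension built from $\mathcal{O}_X(-2)$ and $K'$ (note $6=1+5$), the extension classes being pinned down by the requirement that $F$ and $G$ restrict to free modules on the formal neighbourhood of $p$; here $\operatorname{rk}F+\operatorname{rk}G=9$ is the total rank of the four reflexive building blocks $\mathcal{O}_X(-2),\mathcal{O}_X(-1),K,K'$. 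The crux of the whole argument is to show that these universal extensions really are vector bundles and not merely reflexive sheaves; this is a local assertion about the $\tfrac13(1,1,1)$ singularity, which I would verify by a McKay-type computation with the $\mu_3$-equivariant Koszul complex on the three local coordinates, identifying the relevant self-extensions of the maximal Cohen--Macaulay modules $M_1,M_2$ over $\widehat{\mathcal{O}}_{X,p}$ with free modules. I expect this step to be the main obstacle.

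Granting the construction, the remaining points are bookkeeping with the two computations above. For condition (1), and more generally for $\operatorname{Ext}^i(T,T)=0$ for $i>0$ (which makes $T$ tilting): since $F$ and $G$ are locally free, splicing their defining sequences with the sequences above writes every $\operatorname{Ext}^{>0}$ among $F,G,\mathcal{O}_X,\mathcal{O}_X(3)$ as $H^{>0}$ of a direct sum of line bundles $\mathcal{O}_X(m)$ with $m>-6$, which vanishes. That $T$ generates $D^b(X)$ follows because the defining sequences for $F,G$, together with the two Euler-type sequences, recover $\mathcal{O}_X(-2),\mathcal{O}_X(-1),\mathcal{O}_X(1),\mathcal{O}_X(2)$ inside $\langle F\oplus G,\mathcal{O}_X,\mathcal{O}_X(3)\rangle$ (this being the point of the particular extensions chosen), so that all six generators of $D^b(X)$ lie there. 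Hence $T$ is a tilting bundle and $D^b(X)\cong D^b(A)$ with $A=\operatorname{End}(T)$. With respect to the order $(F\oplus G,\ \mathcal{O}_X,\ \mathcal{O}_X(3))$ all backward homomorphism spaces vanish: $\operatorname{Hom}^\bullet(\mathcal{O}_X,F\oplus G)=H^\bullet(F\oplus G)=0$, $\operatorname{Hom}^\bullet(\mathcal{O}_X(3),F\oplus G)=H^\bullet((F\oplus G)(-3))=0$, and $\operatorname{Hom}^\bullet(\mathcal{O}_X(3),\mathcal{O}_X)=H^\bullet(\mathcal{O}_X(-3))=0$; so $A$ is block upper triangular with diagonal blocks $\operatorname{End}(F\oplus G)=R_X$, $\operatorname{End}(\mathcal{O}_X)=k$ and $\operatorname{End}(\mathcal{O}_X(3))=k$, which yields $D^b(X)=\langle F\oplus G,\mathcal{O}_X,\mathcal{O}_X(3)\rangle\cong\langle D^b(R_X),D^b(k),D^b(k)\rangle$. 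Finally $\dim_k R_X=\dim\operatorname{Hom}(F,F)+\dim\operatorname{Hom}(F,G)+\dim\operatorname{Hom}(G,F)+\dim\operatorname{Hom}(G,G)=45$ is a routine count of spaces of global sections read off from the defining sequences.
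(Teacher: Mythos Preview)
Your overall architecture---exhibit a tilting bundle $T=(F\oplus G)\oplus\mathcal{O}_X\oplus\mathcal{O}_X(3)$, read off the SOD from the block-triangular shape of $\operatorname{End}(T)$---is the same as the paper's, but your construction of $F$ and $G$ is different and leaves the decisive step unproved. You build $F,G$ as abstract extensions of reflexive sheaves by the Euler-type kernels $K,K'$, and you correctly flag that showing these extensions are locally free at the $\tfrac13(1,1,1)$ point $p$ is ``the main obstacle''---but you do not actually carry out this local computation, so the argument is incomplete on its own terms. (Incidentally, your twists are interchanged: $F$ should be an extension of $\mathcal{O}_X(-2)$ by $K$ and $G$ of $\mathcal{O}_X(-1)$ by $K'$, not the other way round.)

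The paper bypasses this obstacle entirely. Let $Z\cong\mathbf{P}^2$ be the hyperplane at infinity, a member of $\vert\mathcal{O}_X(3)\vert$ that \emph{does not pass through} the singular point $p=[0{:}0{:}0{:}1]$, and define
\[
0\to F\to\mathcal{O}_X^3\to\mathcal{O}_Z(1)\to 0,\qquad 0\to G\to\mathcal{O}_X^6\to\mathcal{O}_Z(2)\to 0.
\]
Near $p$ the quotients $\mathcal{O}_Z(i)$ vanish, so $F\cong\mathcal{O}_X^3$ and $G\cong\mathcal{O}_X^6$ there; away from $p$ one is on a smooth variety. Thus local freeness is immediate and no McKay computation is needed. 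Generation is also cleaner than your Koszul argument: the cones of $F\hookrightarrow\mathcal{O}_X^3$, $G\hookrightarrow\mathcal{O}_X^6$, $\mathcal{O}_X\hookrightarrow\mathcal{O}_X(3)$ produce $\mathcal{O}_Z(1),\mathcal{O}_Z(2),\mathcal{O}_Z(3)$, a full exceptional collection on $Z$; hence all $\mathcal{O}_X(3m)$ lie in the subcategory, and these already generate $D^-(X)$.

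Finally, the claim that $\operatorname{Ext}^{>0}$ among $F,G$ reduces to $H^{>0}(\mathcal{O}_X(m))$ with $m>-6$ is too quick. With the paper's definitions the long exact sequences bring in $\operatorname{Ext}^1_X(\mathcal{O}_Z(a),\mathcal{O}_Z(b))$ and $\operatorname{Ext}^1_X(\mathcal{O}_Z(a),\mathcal{O}_X)$, which are genuinely nonzero (for instance $\operatorname{Ext}^1_X(\mathcal{O}_Z(1),\mathcal{O}_Z(1))\cong k^{10}$). The actual content is that in each case the connecting map $\operatorname{Hom}(-,\mathcal{O}_X^n)\to\operatorname{Hom}(-,\mathcal{O}_Z(i))$ is surjective; the paper checks this via commutative diagrams using the resolution $0\to\mathcal{O}_X(a-3)\to\mathcal{O}_X(a)\to\mathcal{O}_Z(a)\to 0$, and this surjectivity---not a degree bound---is what kills the higher $\operatorname{Ext}$. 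The dimension count $\dim R_X=9+9+24+3=45$ then follows from the same exact sequences.
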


\section{Preliminaries}
%%%%%%%%%%%%%%%%%%%%%%%%
%%%%%%%%%%%%%%%%%%%%%%%%
%%%%%%%%%%%%%%%%%%%%%%%%

We use the following notation.
$D^b(X)$ (resp. $D^-(X)$) denotes the bounded (resp. bounded from above) 
derived category of coherent sheaves on a variety $X$.
$D^b(R)$ (resp. $D^-(R)$) denotes the bounded (resp. bounded from above) 
derived category of finitely generated right $R$-modules for an associative $k$-algebra $R$.
We write
\[
\text{Hom}^*(A,B) := H^*(\mathbf{R}\text{Hom}(A,B)) = \bigoplus_i \text{Hom}(A,B[i])[-i].
\]
We work over $k = \mathbf{C}$.

\vskip 1pc

We recall some definitions from \cite{Bondal}.
Let $A$ be a $k$-linear triangulated category.
A set of objects $\{a_i\} \subset A$ is said to {\em generate} $A$ when the following holds: for $a \in A$,
if $\text{Hom}(a_i,a[j]) = 0$ for all $i$ and $j$, then $a \cong 0$.

We say that $A$ has a {\em semi-orthogonal decomposition} to triangulated full subcategories $B_1,\dots,B_n$ 
and denote
\[
A = \langle B_1,\dots,B_n \rangle
\]
if (1) $\text{Hom}(b_i,b_j) = 0$ for all $b_i \in B_i$ and $b_j \in B_j$ such that $i > j$, and
(2) $A$ is the smallest triangulated subcategory which contains all $B_i$. 
When $B_i$ is generated by $b_i$ for each $i$, then we also write
\[
A = \langle b_1,\dots,b_n \rangle.
\]

Let $i_*: B \to A$ be a triangulated full subcategory. 
$B$ is said to be {\em left (resp. right) admissible} if $i_*$ has a left adjoint $i^*$ (resp. a right adjoint $i^!$).
The {\em left (resp. right) semi-orthogonal complement} ${}^{\perp}B$ (resp. $B^{\perp}$) is  
defined as a full subcategory of $A$ such that 
\[
\begin{split}
&{}^{\perp}B = \{a \in A \mid \text{Hom}(a,b) = 0\,\,\, \forall b \in B\}, \\
&B^{\perp} = \{a \in A \mid \text{Hom}(b,a) = 0\,\,\, \forall b \in B\}.
\end{split}
\]
If $B$ is a left (resp. right) admissible subcategory, then 
we have a semi-orthogonal decomposition
\[
\begin{split}
A = \langle B, {}^{\perp}B \rangle \quad (\text{resp. }= \langle B^{\perp}, B \rangle).
\end{split}
\]
Indeed we have the following distinguished triangles for any $a \in A$:
\[
\begin{split}
&c \to a \to i_*i^*a \to c[1], \\ 
&d[-1] \to i_*i^!a \to a \to d,
\end{split}
\]
such that we have $i_*i^*a \in B$, $c \in {}^{\perp}B$, $i_*i^!a \in B$ and $d \in B^{\perp}$.

\vskip 1pc

We recall a version of the tilting theory in \cite{TU}.
Let $X$ be a projective variety and let $P$ be a {\em perfect complex}, an object in $D^b(X)$ which is locally isomorphic to a
bounded complex of locally free sheaves.
$P$ is said to be {\em tilting} if $\text{Hom}(P,P[i]) = 0$ for $i \ne 0$.
The endomorphism algebra $R = \text{End}(P)$ is an associative algebra.

\begin{Lem}[\cite{TU}~Lemma 3.3]
Let $P \in D^b(X)$ be a tilting object, and let $R = \text{End}(P)$.
Let $\Phi: D^-(X) \to D^-(R)$ and $\Psi: D^-(R) \to D^-(X)$ be functors defined by 
$\Phi(\bullet) = \mathbf{R}\text{Hom}(P,\bullet)$
and $\Psi(\bullet) = \bullet \otimes^{\mathbf{L}}_R P$.
Let $A^-$ be the essential image of the functor $\Psi$, and let $A^b = A^- \cap D^b(X)$.
Then $\Psi$ is a left adjoint functor of $\Phi$, and 
$\Phi$ induces an equivalence of triangulated categories $A^- \cong D^-(R)$.
Moreover, $\Phi(D^b(X)) = D^b(R)$, and $\Phi$ induces an equivalence $A^b \cong D^b(R)$. 
\end{Lem}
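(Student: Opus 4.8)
The plan is to deduce everything formally from the derived tensor--Hom adjunction together with the tilting hypothesis, and then to reduce the two bounded assertions to a single boundedness statement for $\Psi$, which is where the real work lies. To begin, $P$ is naturally a complex of $\mathcal{O}_X$--$R$ bimodules, the right $R$-action being the tautological action of $R=\text{End}(P)$, and it is perfect over $\mathcal{O}_X$. The derived tensor--Hom adjunction then makes $\Psi=\bullet\otimes^{\mathbf{L}}_R P$ left adjoint to $\Phi=\mathbf{R}\text{Hom}(P,\bullet)$; since $P$ is perfect, $\Phi$ preserves bounded-above complexes, and since $P$ is bounded, so does $\Psi$, which gives the asserted adjunction between the $D^-$'s.

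Next I would prove $\Phi\Psi\cong\text{id}$ on $D^-(R)$. One has $\Phi\Psi(R)=\mathbf{R}\text{Hom}(P,R\otimes^{\mathbf{L}}_RP)=\mathbf{R}\text{Hom}(P,P)$, which by the tilting hypothesis is concentrated in degree $0$ and equal to $R$, and under this identification the unit $\eta_R$ is the identity. The full subcategory of $D^-(R)$ on which the unit is an isomorphism is triangulated, closed under direct summands, and closed under sequential homotopy colimits, which are preserved both by $\Psi$ (a left adjoint) and by $\Phi$ (as $P$ is compact); since any object of $D^-(R)$ is the sequential homotopy colimit of the brutal truncations of a bounded-above complex of finitely generated projective $R$-modules, and each such truncation is a finite iterated extension of summands of shifts of $R$, the unit is an isomorphism everywhere. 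Hence $\Phi\Psi\cong\text{id}$, so $\Psi$ is fully faithful with essential image $A^-$, and $\Phi|_{A^-}$ is a quasi-inverse; this is the equivalence $A^-\cong D^-(R)$.

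For the bounded statements, note that for a coherent sheaf $N$ the complex $\Phi(N)=\mathbf{R}\text{Hom}(P,N)$ has bounded cohomology, finite-dimensional over $k$ in each degree, because $P$ is perfect and $X$ is projective; as $R$ is itself finite-dimensional over $k$, it follows that $\Phi(D^b(X))\subseteq D^b(R)$. I then claim both remaining assertions follow from
\[
(\ast)\qquad \Psi(D^b(R))\subseteq D^b(X).
\]
Granting $(\ast)$, the identity $\Phi\Psi\cong\text{id}$ gives $D^b(R)=\Phi(\Psi(D^b(R)))\subseteq\Phi(D^b(X))$, hence $\Phi(D^b(X))=D^b(R)$; and every $N\in A^b=A^-\cap D^b(X)$ satisfies $N\cong\Psi\Phi(N)$ with $\Phi(N)\in D^b(R)$, so $A^b=\Psi(D^b(R))$ and $\Phi|_{A^-}$ restricts to an equivalence $A^b\cong D^b(R)$.

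The main obstacle is therefore $(\ast)$, and this is the step I expect to be the real work. Since $R$ is finite-dimensional, $D^b(R)$ is generated as a triangulated category by its finitely many simple modules, so it suffices to show $\Psi(S)\in D^b(X)$ for every simple $S$; using $\Psi(R)=P\in D^b(X)$ and the projective-cover triangles $\Psi(\Omega S)\to\Psi(Q)\to\Psi(S)$, this reduces to a uniform bound on the $\Psi$-images of the iterated syzygies of $S$, which would follow once $P$ is known to have finite flat dimension as an $R$-module. Establishing this Tor-dimension bound is where one genuinely uses that $P$ is a perfect complex and that $X$ is a projective, hence finite-dimensional, variety; an alternative is to detect the boundedness of $\Psi(S)\in D^-(X)$ by testing $\mathbf{R}\text{Hom}$ against a classical generator of $D^b(X)$ constructed from an ample line bundle on $X$ and transporting the resulting boundedness back across the adjunction. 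Once $(\ast)$ is in hand, the proof is complete.
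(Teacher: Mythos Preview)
Your outline is correct and is considerably more detailed than what the paper actually provides: the paper's entire proof reads ``\cite{TU}~Lemma~3.3 treats the case that $A^- = D^-(X)$. But the same proof works in our situation.'' So there is essentially nothing to compare against; you have reconstructed the shape of the Toda--Uehara argument (adjunction, unit isomorphism $\Phi\Psi\cong\mathrm{id}$ via compactness of $P$ and approximation by perfect truncations, then reduction of the bounded claims to $(\ast)$) rather than simply citing it. That reconstruction is accurate, and your identification of $(\ast)\colon \Psi(D^b(R))\subseteq D^b(X)$ as the one substantive step is exactly right.

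One remark on your two proposed routes to $(\ast)$. The first, via finite flat dimension of $P$ over $R$, is the correct target statement, but be aware that it cannot be deduced from homological finiteness of $R$ alone: already in the surface example in the paper's introduction one has $R=\mathrm{End}(F_S)\cong k[t]/(t^2)$, which has infinite global dimension, so simple modules do not admit finite projective resolutions and the syzygy argument as you phrase it does not terminate by itself. The bound must therefore genuinely come from the geometry of $X$ (finite cohomological dimension, perfection of $P$), which is what your second route --- testing boundedness against an ample or classical generator of $D^b(X)$ and transporting across the adjunction --- exploits. That second route is closer in spirit to how \cite{TU} proceeds. Since you already flag $(\ast)$ as the place where the real work lies and since the paper defers all of it to the reference, there is no gap in your proposal relative to the paper.
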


\begin{proof}
\cite{TU}~Lemma 3.3 treats the case that $A^- = D^-(X)$.
But the same proof works in our situation.
\end{proof}

\begin{Cor}
Let $P_1, \dots, P_n \in D^b(X)$ be tilting objects, and let $R_i = \text{End}(P_i)$.
Assume that $\{P_i\}$ generates $D^-(X)$ and that $\text{Hom}^*(P_i,P_j) = 0$ for $i > j$. 
Then there is a semi-orthogonal decomposition
\[
D^b(X) = \langle P_1,\dots,P_n \rangle \cong \langle D^b(R_1), \dots, D^b(R_n) \rangle.
\]
\end{Cor}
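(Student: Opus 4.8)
The plan is to apply the Lemma to each $P_i$ individually and then to splice the resulting images together into a semi-orthogonal decomposition, using that each of them is an admissible subcategory of $D^b(X)$. For each $i$ put $\Phi_i = \mathbf{R}\text{Hom}(P_i,-)$ and $\Psi_i = -\otimes^{\mathbf{L}}_{R_i}P_i$, and let $A_i^- \subset D^-(X)$, $A_i^b = A_i^-\cap D^b(X)$ be the subcategories furnished by the Lemma; thus $\Phi_i$ restricts to an equivalence $A_i^b \cong D^b(R_i)$ whose quasi-inverse is $\Psi_i$, and $\Psi_i(R_i) = R_i\otimes^{\mathbf{L}}_{R_i}P_i = P_i \in A_i^b$. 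A routine adjunction computation — using that $\Psi_i$ is left adjoint to $\Phi_i$ and that $\Psi_i\Phi_i \cong \mathrm{id}$ on $A_i^b$ — shows that $\Psi_i\Phi_i\colon D^b(X)\to A_i^b$ is right adjoint to the inclusion. Hence each $A_i^b$ is right admissible in $D^b(X)$ and
\[
(A_i^b)^{\perp} = \{a\in D^b(X)\mid \Phi_i a = 0\} = \{a\mid \text{Hom}^*(P_i,a)=0\}.
\]

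Next I prove the semi-orthogonality: for $i>j$ one has $A_j^b\subset (A_i^b)^{\perp}$, i.e.\ $\Phi_i$ annihilates $A_j^b$. Consider the composite $\Phi_i\Psi_j\colon D^-(R_j)\to D^-(R_i)$. It is exact; it kills the free module, since $\Phi_i\Psi_j(R_j) = \Phi_i(P_j) = \mathbf{R}\text{Hom}(P_i,P_j)$ and $\text{Hom}^*(P_i,P_j)=0$ by hypothesis; and it commutes with the homotopy colimits realizing an arbitrary $M\in D^-(R_j)$ as a colimit of bounded complexes of free $R_j$-modules — for this one uses that $\Psi_j = -\otimes^{\mathbf{L}}_{R_j}P_j$ is a left adjoint and that $\Phi_i = \mathbf{R}\Gamma\bigl(X,\,P_i^{\vee}\otimes^{\mathbf{L}}-\bigr)$ commutes with filtered colimits because $P_i$ is a perfect complex on the (Noetherian, finite-dimensional) variety $X$. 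Therefore $\Phi_i\Psi_j$ is zero on all of $D^-(R_j)$, in particular on $D^b(R_j)$. Since every $b\in A_j^b$ is isomorphic to $\Psi_j\Phi_j b$, we get $\Phi_i b = \Phi_i\Psi_j(\Phi_j b)=0$. Consequently, for $b_i\in A_i^b$ and $b_j\in A_j^b$ with $i>j$, the adjunction $\Psi_i\dashv\Phi_i$ gives $\text{Hom}(b_i,b_j) = \text{Hom}(\Psi_i\Phi_i b_i, b_j) = \text{Hom}_{D^b(R_i)}(\Phi_i b_i, \Phi_i b_j) = 0$.

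Finally I assemble the decomposition by induction on the admissible pieces. Right admissibility of $A_n^b$ gives $D^b(X) = \langle (A_n^b)^{\perp}, A_n^b\rangle$, and by the previous step $A_1^b,\dots,A_{n-1}^b\subset (A_n^b)^{\perp}$. For any triangulated subcategory $\mathcal{C}\subset D^b(X)$ containing $A_k^b$, the approximation triangle $\iota\Psi_k\Phi_k a\to a\to d$ (with $\iota\Psi_k\Phi_k a\in A_k^b$ and $d\in (A_k^b)^{\perp}$) stays inside $\mathcal{C}$, so $A_k^b$ is again right admissible in $\mathcal{C}$, with complement $\mathcal{C}\cap (A_k^b)^{\perp}$. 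Iterating inside $(A_n^b)^{\perp}$, then inside $(A_n^b)^{\perp}\cap (A_{n-1}^b)^{\perp}$, and so on, we obtain
\[
D^b(X) = \langle \mathcal{B}, A_1^b, A_2^b, \dots, A_n^b\rangle, \qquad \mathcal{B} = \bigcap_{i=1}^{n}(A_i^b)^{\perp} = \{a\in D^b(X)\mid \text{Hom}^*(P_i,a)=0\ \forall i\}.
\]
Since $\{P_i\}$ generates $D^-(X)$ and $\mathcal{B}\subset D^b(X)\subset D^-(X)$, every object of $\mathcal{B}$ is zero, so $\mathcal{B}=0$ and $D^b(X) = \langle A_1^b,\dots,A_n^b\rangle$ is a semi-orthogonal decomposition. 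Transporting each $A_i^b$ along the equivalence $\Phi_i$ identifies it with $D^b(R_i)$ (which is generated by $R_i$, whence $A_i^b$ is generated by $P_i$), giving $D^b(X) \cong \langle D^b(R_1),\dots,D^b(R_n)\rangle = \langle P_1,\dots,P_n\rangle$.

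The main obstacle is the semi-orthogonality step of the second paragraph. The tempting shortcut — to deduce $\Phi_i(A_j^b)=0$ from $\text{Hom}^*(P_i,P_j)=0$ by a finite dévissage — fails because $R_j = \text{End}(P_j)$ need not have finite global dimension (already in the surface analogue $R=k[t]/(t^2)$ does not), so $A_j^b$ is strictly larger than the thick subcategory generated by $P_j$ and the dévissage does not terminate; one really has to pass to bounded-above complexes of free $R_j$-modules and invoke compactness of the perfect complex $P_i$ (equivalently, that $\mathbf{R}\Gamma$ on $X$ commutes with filtered colimits) so that $\Phi_i$ commutes with the relevant homotopy colimits. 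Once this is in place, the admissibility bookkeeping and the vanishing $\mathcal{B}=0$ coming from the generation hypothesis are purely formal.
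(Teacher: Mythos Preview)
Your proof is correct and follows the same strategy as the paper: use the adjunction $\Psi_i \dashv \Phi_i$ to obtain right admissibility of the images $A_i$, verify semi-orthogonality from the hypothesis $\text{Hom}^*(P_i,P_j)=0$ for $i>j$, and invoke generation to kill the leftover orthogonal. The only difference is organizational --- the paper first assembles the semi-orthogonal decomposition in $D^-(X)$ (where bounded-above free resolutions of $R_j$-modules make the semi-orthogonality of the $A_i^-$ more transparent) and then intersects with $D^b(X)$ via the last clause of the Lemma, whereas you work directly in $D^b(X)$ and supply explicitly the compactness/hocolim argument that the paper leaves implicit; your closing paragraph correctly identifies why a naive bounded d\'evissage would not suffice.
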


\begin{proof}
We have functors $\Phi_i: D^-(X) \to D^-(R_i)$ and 
$\Psi_i: D^-(R_i) \to D^-(X)$ as in the lemma.
Let $A^-_i$ be the essential image of the functor $\Psi_i$, and let $A^b_i = A^-_i \cap D^b(X)$.
We have $A^b_i \cong D^b(R_i)$ by $\Phi_i$.

$\Phi_n$ is a right adjoint functor of $\Psi_n$, hence $A^-_n$ is a right admissible subcategory.
Thus we have a semi-orthogonal decomposition
\[
D^-(X) = \langle D^-_n, A^-_n \rangle
\]
with $D^-_n = (A^-_n)^{\perp}$.
$\Phi_{n-1}$ induces a right adjoint functor $D^-_n \to D^-(R_{n-1})$ of the functor 
$\Psi_{n-1}: D^-(R_{n-1}) \to A^-_{n-1} \subset D^-_n$.
Hence we have a semi-orthogonal decomposition
\[
D^-_n = \langle D^-_{n-1}, A^-_{n-1} \rangle 
\]
with $D^-_{n-1} = (A^-_{n-1})^{\perp}$.
In this way, we obtain a 
semi-orthogonal decomposition
\[
D^-(X) = \langle A^-_1, \dots, A^-_n \rangle. 
\]
By taking the intersection with $D^b(X)$, we obtain our claim using the last part of the lemma.
\end{proof}

\section{Proof}
%%%%%%%%%%%%%%%%%%%%%%%%
%%%%%%%%%%%%%%%%%%%%%%%%
%%%%%%%%%%%%%%%%%%%%%%%%

Let $X = \mathbf{P}(1,1,1,3)$ be a weighted projective space of dimension $3$.
It is a projective cone over $\mathbf{P}^2$ by $\mathcal{O}_{\mathbf{P}^2}(3)$.
$X$ has a Gorenstein isolated quotient singularity of type $\frac 13(1,1,1)$ at $P = [0:0:0:1]$.
We have $K_X \cong \mathcal{O}_X(-6)$.
Let $Z \cong \mathbf{P}^2$ be a hyperplane section of $X$ at infinity.  
We have $Z \in \vert \mathcal{O}_X(3) \vert$, and 
$N_{X/Z} = \mathcal{O}_Z(3)$.

Let $F$ and $G$ be locally free sheaves of ranks $3$ and $6$ on $X$ defined by natural exact sequences:
\[
\begin{split}
&0 \to F \to \mathcal{O}_X^3 \to \mathcal{O}_Z(1) \to 0, \\
&0 \to G \to \mathcal{O}_X^6 \to \mathcal{O}_Z(2) \to 0.
\end{split}
\]

\begin{Lem}
$F,G,\mathcal{O}_X,\mathcal{O}_X(3)$ generate $D^-(X)$.
\end{Lem}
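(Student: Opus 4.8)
The plan is to check the generation criterion directly: suppose $a \in D^-(X)$ satisfies $\text{Hom}^*(F,a) = \text{Hom}^*(G,a) = \text{Hom}^*(\mathcal{O}_X,a) = \text{Hom}^*(\mathcal{O}_X(3),a) = 0$, and deduce $a \cong 0$. Let $i: Z \hookrightarrow X$ be the inclusion. The two geometric inputs I would use are: (i) $Z \in \vert \mathcal{O}_X(3) \vert$ is a Cartier divisor missing the singular point $P$, so $\mathcal{O}_Z$ is a perfect $\mathcal{O}_X$-module with the Koszul resolution $0 \to \mathcal{O}_X(-3) \xrightarrow{x_3} \mathcal{O}_X \to \mathcal{O}_Z \to 0$, where $x_3$ is the defining section; and (ii) $U := X \setminus Z$ is affine (it is $\mathbf{A}^3/(\mathbf{Z}/3)$, the cone over the Veronese), so an object of $D^-(U)$ with vanishing hypercohomology is zero.

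First I would reduce to orthogonality against the pushforwards $\mathcal{O}_Z(n) := i_*\mathcal{O}_{\mathbf{P}^2}(n)$. Applying $\mathbf{R}\text{Hom}(-,a)$ to the two defining sequences of $F$ and $G$ and to $0 \to \mathcal{O}_X \to \mathcal{O}_X(3) \to \mathcal{O}_Z(3) \to 0$ (the Koszul sequence twisted by $\mathcal{O}_X(3)$), the resulting triangles together with the four hypotheses give $\text{Hom}^*(\mathcal{O}_Z(\ell),a) = 0$ for $\ell = 1,2,3$. Since $(\mathcal{O}_{\mathbf{P}^2}(1),\mathcal{O}_{\mathbf{P}^2}(2),\mathcal{O}_{\mathbf{P}^2}(3))$ is a full exceptional collection on $Z \cong \mathbf{P}^2$, every $\mathcal{O}_{\mathbf{P}^2}(n)$ is obtained from these three by finitely many triangles; pushing forward by the exact functor $i_*$ and using that $\text{Hom}^*(-,a)$ is cohomological yields $\text{Hom}^*(\mathcal{O}_Z(n),a) = 0$ for all $n \in \mathbf{Z}$.

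Next I would localize. For each $m$, tensoring the Koszul sequence with the torsion-free sheaf $\mathcal{O}_X(-m)$ stays exact (multiplication by $x_3$ is injective on a torsion-free sheaf over an integral scheme), so $\mathbf{R}\text{Hom}(\mathcal{O}_Z(-m),a)$ is a shift of the cone of $\mathbf{R}\Gamma(X,a(m)) \xrightarrow{x_3} \mathbf{R}\Gamma(X,a(m+3))$; by the previous paragraph this map is an isomorphism for every $m \in \mathbf{Z}$. Since $\mathbf{R}\Gamma(X,a) = 0$ (this is $\text{Hom}^*(\mathcal{O}_X,a) = 0$), taking $m \in 3\mathbf{Z}$ gives $\mathbf{R}\Gamma(X,a(3k)) = 0$ for all $k$, hence $\mathbf{R}\Gamma(U,a\vert_U) = \varinjlim_k \mathbf{R}\Gamma(X,a(3k)) = 0$, and since $U$ is affine this forces $a\vert_U \cong 0$. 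Then $(a \otimes^{\mathbf{L}} \mathcal{O}_X(m))\vert_U \cong 0$ for every $m$, so $\varinjlim_k \mathbf{R}\Gamma(X,a(m+3k)) = 0$; but all transition maps in this colimit are the isomorphisms found above, so $\mathbf{R}\Gamma(X,a(m)) = 0$, i.e. $\text{Hom}^*(\mathcal{O}_X(m),a) = 0$, for all $m \in \mathbf{Z}$. Finally, $\{\mathcal{O}_X(m)\}_{m \in \mathbf{Z}}$ generates $D^-(X)$: if $a \ne 0$ has top nonvanishing cohomology sheaf $\mathcal{H}$ in degree $j$, then for $m \gg 0$ the nonzero group $H^0(X,\mathcal{H}(m))$ survives at the spot $(0,j)$ of the spectral sequence $E_2^{p,q} = H^p(X,\mathcal{H}^q(a)(m)) \Rightarrow H^{p+q}(\mathbf{R}\Gamma(X,a(m)))$ — only the finitely many sheaves $\mathcal{H}^q(a)$ with $j - \dim X \le q \le j$ affect total degree $j$, so Serre vanishing applies to them — whence $\mathbf{R}\Gamma(X,a(m)) \ne 0$, a contradiction. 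Therefore $a \cong 0$.

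I expect the crux to be the passage in the third paragraph from orthogonality against the $\mathcal{O}_Z(n)$ to orthogonality against all reflexive twists $\mathcal{O}_X(m)$: the hypotheses only directly give orthogonality to the \emph{invertible} twists $\mathcal{O}_X(3k)$, and those all restrict to $\mathcal{O}_U$ on $U$, so they cannot by themselves see a complex supported on $Z$. The device is to use perfectness of $\mathcal{O}_Z$ (equivalently, $Z$ Cartier and disjoint from $P$) to convert orthogonality against the $\mathcal{O}_Z(n)$ into the isomorphisms $\mathbf{R}\Gamma(X,a(m)) \cong \mathbf{R}\Gamma(X,a(m+3))$, which then let one climb from the congruence class $0 \bmod 3$ to every $m$. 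Besides this, the only points needing a word of care are the exactness of the Koszul sequence after tensoring with the non-invertible $\mathcal{O}_X(-m)$ and the localization identity $\mathbf{R}\Gamma(U,-\vert_U) = \varinjlim_k \mathbf{R}\Gamma(X,-(3k))$ on $D^-(X)$.
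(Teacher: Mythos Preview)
Your argument is correct and, at its core, is exactly the paper's proof: from the defining triangles you get orthogonality to $\mathcal{O}_Z(1),\mathcal{O}_Z(2),\mathcal{O}_Z(3)$, hence to all of $i_*D^b(Z)$; then the Koszul sequence $0\to\mathcal{O}_X(3k-3)\to\mathcal{O}_X(3k)\to\mathcal{O}_Z(3k)\to 0$ inductively gives orthogonality to every invertible twist $\mathcal{O}_X(3k)$; finally the ample line bundles $\{\mathcal{O}_X(3k)\}$ generate $D^-(X)$ by the Serre-vanishing argument you wrote out. The paper's proof stops right there.

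Your third paragraph is an unnecessary detour, and the worry you voice in the final paragraph is unfounded. The invertible twists $\mathcal{O}_X(3k)$ \emph{do} see complexes supported on $Z$: for instance $\text{Hom}(\mathcal{O}_X,\mathcal{O}_Z)=k\ne 0$. More to the point, your own closing spectral-sequence argument works verbatim with $m$ restricted to multiples of $3$, since $\mathcal{O}_X(3)$ is ample and invertible. So once you have $\mathbf{R}\Gamma(X,a(3k))=0$ for all $k$ you are done; there is no need to pass to $U$ or to the non-invertible $\mathcal{O}_X(m)$.

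One small wrinkle in the detour, should you want to keep it: for $3\nmid m$ the sheaf $\mathcal{O}_X(m)$ is not perfect (the quotient singularity at $P$ gives it infinite projective dimension), so $\mathbf{R}\text{Hom}(\mathcal{O}_X(-m),a)$ and $\mathbf{R}\Gamma(X,a\otimes^{\mathbf L}\mathcal{O}_X(m))$ are genuinely different objects, and your phrase ``the isomorphisms found above'' conflates them. The transition maps in $\varinjlim_k\mathbf{R}\Gamma(X,a\otimes^{\mathbf L}\mathcal{O}_X(m+3k))$ are nonetheless isomorphisms, but for a different reason: their cone is $\mathbf{R}\Gamma(X,a\otimes^{\mathbf L}\mathcal{O}_Z(m+3k+3))$, and since $\mathcal{O}_Z(n)$ is perfect with dual $\mathcal{O}_Z(3-n)[-1]$ this equals $\mathbf{R}\text{Hom}(\mathcal{O}_Z(\,\cdot\,),a)[1]=0$ by your second paragraph.
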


\begin{proof}
Let $D$ be the smallest full subcategory of $D^b(X)$ which contains $F,G, \mathcal{O}_X,\mathcal{O}_X(3)$ 
and closed under shifts and cone constructions.
Then $\mathcal{O}_Z(1),\mathcal{O}_Z(2),\mathcal{O}_Z(3)$ are contained in $D$.
Hence $D^b(Z) \subset D$.
It follows that $\mathcal{O}_X(3m) \in D$ for all $m$.

For any non-zero object $a \in D^-(X)$, if $i$ is the largest integer such that $H^i(a) \ne 0$, then 
there is a non-zero morphism $\mathcal{O}_X(3m)[-i] \to a$ for some integer $m$.
Therefore $D^-(X)$ is generated by $F,G, \mathcal{O}_X,\mathcal{O}_X(3)$.
\end{proof}

\begin{Lem}
(1) $\text{Hom}^*(\mathcal{O}_X,\mathcal{O}_X) \cong k$.

(2) $\text{Hom}^*(\mathcal{O}_X,\mathcal{O}_Z(1)) \cong k^3$.

(3) $\text{Hom}^*(\mathcal{O}_X,\mathcal{O}_Z(2)) \cong k^6$.

(4) $\text{Hom}^*(\mathcal{O}_Z(1),\mathcal{O}_X) \cong k^6[-1]$.

(5) $\text{Hom}^*(\mathcal{O}_Z(2),\mathcal{O}_X) \cong k^3[-1]$.

(6) $\text{Hom}^*(\mathcal{O}_Z(1),\mathcal{O}_Z(1)) \cong k \oplus k^{10}[-1]$.

(7) $\text{Hom}^*(\mathcal{O}_Z(1),\mathcal{O}_Z(2)) \cong k^3 \oplus k^{15}[-1]$.

(8) $\text{Hom}^*(\mathcal{O}_Z(2),\mathcal{O}_Z(1)) \cong k^6[-1]$.
\end{Lem}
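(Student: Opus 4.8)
The plan is to reduce every entry to cohomology of line bundles on $\mathbf{P}^2$. The standard facts I would invoke repeatedly are: $\dim H^0(\mathbf{P}^2,\mathcal{O}(n)) = \binom{n+2}{2}$ for $n\ge 0$ (and $=0$ for $n<0$), $H^1(\mathbf{P}^2,\mathcal{O}(n))=0$ for all $n$, and $H^2(\mathbf{P}^2,\mathcal{O}(n))\cong H^0(\mathbf{P}^2,\mathcal{O}(-n-3))^\vee$; the identifications $\mathcal{O}_Z(d)=\mathcal{O}_{\mathbf{P}^2}(d)$ and normal bundle $\mathcal{O}_Z(3)$; and, crucially, that $Z$ is a Cartier divisor, so that $\mathcal{O}_Z$ admits the Koszul resolution
\[
0 \to \mathcal{O}_X(-3) \xrightarrow{\,s\,} \mathcal{O}_X \to \mathcal{O}_Z \to 0,
\]
with $s\in H^0(X,\mathcal{O}_X(3))$ the section cutting out $Z$.

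For (1)–(3), since $\mathcal{O}_X$ is invertible we have $\text{Hom}^*(\mathcal{O}_X,\mathcal{F})=\mathbf{R}\Gamma(X,\mathcal{F})$. I would quote the standard vanishing $H^i(X,\mathcal{O}_X)=0$ for $i>0$ on a weighted projective space (equivalently, $X$ is Fano with rational singularities), which gives (1); and for (2), (3), $\mathbf{R}\Gamma(X,\mathcal{O}_Z(d))=\mathbf{R}\Gamma(\mathbf{P}^2,\mathcal{O}(d))$ is concentrated in degree $0$ with $\binom{d+2}{2}$ sections, giving $k^3$ and $k^6$.

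For (4), (5), dualizing the Koszul resolution gives $\mathbf{R}\mathcal{H}om_X(\mathcal{O}_Z,\mathcal{O}_X)\cong\mathcal{O}_Z(3)[-1]$, whence $\text{Hom}^*(\mathcal{O}_Z(d),\mathcal{O}_X)=\mathbf{R}\Gamma(\mathbf{P}^2,\mathcal{O}(3-d))[-1]$; for $d=1,2$ this is $k^6[-1]$ and $k^3[-1]$. (Alternatively one may invoke Serre duality on the Gorenstein variety $X$ with $\omega_X=\mathcal{O}_X(-6)$: $\text{Ext}^i_X(\mathcal{O}_Z(d),\mathcal{O}_X)\cong H^{3-i}(\mathbf{P}^2,\mathcal{O}(d-6))^\vee$, which is nonzero only for $i=1$.)

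For (6)–(8), I would apply $\mathbf{R}\text{Hom}_X(-,\mathcal{O}_Z(b))$ to the Koszul resolution of $\mathcal{O}_Z(a)$; the resulting two-term complex $\mathbf{R}\Gamma(Z,\mathcal{O}_Z(b-a))\to\mathbf{R}\Gamma(Z,\mathcal{O}_Z(b-a+3))$ has differential given by multiplication by $s|_Z=0$, so it splits and
\[
\text{Hom}^*(\mathcal{O}_Z(a),\mathcal{O}_Z(b))\cong\mathbf{R}\Gamma(\mathbf{P}^2,\mathcal{O}(b-a))\;\oplus\;\mathbf{R}\Gamma(\mathbf{P}^2,\mathcal{O}(b-a+3))[-1]
\]
(equivalently, use $\mathbf{L}i^*i_*\mathcal{G}\cong\mathcal{G}\oplus\mathcal{G}(-3)[1]$ for the embedding $i\colon Z\hookrightarrow X$ together with adjunction). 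Substituting $(a,b)=(1,1),(1,2),(2,1)$ and reading off the binomial coefficients yields $k\oplus k^{10}[-1]$, $k^3\oplus k^{15}[-1]$, and $k^6[-1]$. The only step needing real attention is confirming that $Z$ is Cartier — i.e. that $\mathcal{O}_X(3)$ is locally free, which holds because the order-$3$ local class group of the singularity $\frac13(1,1,1)$ annihilates $\mathcal{O}_X(3)$ — since this underlies both the Koszul resolution and the vanishing $s|_Z=0$; everything else is routine $\mathbf{P}^2$-bookkeeping.
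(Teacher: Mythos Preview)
Your proof is correct and follows essentially the same route as the paper: (1)--(3) are declared obvious, (4)--(5) are done via Serre duality on $X$ with $K_X=\mathcal{O}_X(-6)$ (your Koszul dualization is equivalent, and you note the Serre-duality alternative yourself), and (6)--(8) are obtained from the resolution $0\to\mathcal{O}_X(-2)\to\mathcal{O}_X(1)\to\mathcal{O}_Z(1)\to 0$, which is exactly your Koszul sequence twisted. The only difference is that you make the vanishing of the connecting map explicit via $s\vert_Z=0$, whereas the paper leaves this implicit and also records the local computation $\mathcal{H}om^*(\mathcal{O}_Z,\mathcal{O}_Z)\cong\mathcal{O}_Z\oplus N_{Z/X}[-1]$ for (6).
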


\begin{proof}
(1), (2), (3) are obvious.

(4) $\text{Hom}^i(\mathcal{O}_Z(1),\mathcal{O}_X) \cong \text{Hom}^{3-i}(\mathcal{O}_X,\mathcal{O}_Z(-5))^* 
\cong H^{3-i}(Z,\mathcal{O}_Z(-5))^* \cong H^{2-(3-i)}(Z,\mathcal{O}_Z(2))$.

(5) $\text{Hom}^i(\mathcal{O}_Z(2),\mathcal{O}_X) \cong \text{Hom}^{3-i}(\mathcal{O}_X,\mathcal{O}_Z(-4))^* 
\cong H^{3-i}(Z,\mathcal{O}_Z(-4))^* \cong H^{2-(3-i)}(Z,\mathcal{O}_Z(1))$.

(6) $\mathcal{H}om^*(\mathcal{O}_Z(1),\mathcal{O}_Z(1)) \cong \mathcal{O}_Z \oplus N_{Z/X}[-1] 
\cong \mathcal{O}_Z \oplus \mathcal{O}_Z(3)[-1]$, and 
$\text{Hom}^*(\mathcal{O}_Z(1),\mathcal{O}_Z(1)) \cong H^0(Z,\mathcal{H}om^*(\mathcal{O}_Z(1),\mathcal{O}_Z(1))) 
\cong k \oplus k^{10}[-1]$.

(7), (8) as well as (6) follow from an exact sequence $0 \to \mathcal{O}_X(-2) \to \mathcal{O}_X(1) \to \mathcal{O}_Z(1) \to 0$ with 
$\text{Hom}^*(\mathcal{O}_X(1), \mathcal{O}_Z(1)) \cong k$, 
$\text{Hom}^*(\mathcal{O}_X(-2), \mathcal{O}_Z(1)) \cong k^{10}$, 
$\text{Hom}^*(\mathcal{O}_X(1), \mathcal{O}_Z(2)) \cong k^3$, 
$\text{Hom}^*(\mathcal{O}_X(-2), \mathcal{O}_Z(2)) \cong k^{15}$, 
$\text{Hom}^*(\mathcal{O}_X(1), \mathcal{O}_Z) \cong 0$, 
$\text{Hom}^*(\mathcal{O}_X(-2), \mathcal{O}_Z) \cong k^6$.
\end{proof}

\begin{Lem}
(1) $\text{Hom}^*(\mathcal{O}_X,F) \cong 0$.

(2) $\text{Hom}^*(\mathcal{O}_X,G) \cong 0$.

(3) $\text{Hom}^*(\mathcal{O}_X(3),F) \cong 0$.

(4) $\text{Hom}^*(\mathcal{O}_X(3),G) \cong 0$.
\end{Lem}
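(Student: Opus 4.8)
The plan is to apply the functors $\text{Hom}^*(\mathcal{O}_X,-)$ and $\text{Hom}^*(\mathcal{O}_X(3),-)$ to the two defining exact sequences
\[
0 \to F \to \mathcal{O}_X^3 \to \mathcal{O}_Z(1) \to 0, \qquad 0 \to G \to \mathcal{O}_X^6 \to \mathcal{O}_Z(2) \to 0,
\]
and to read off the vanishing from the resulting long exact sequences, feeding in the computations of the previous lemma together with the standard vanishing of line bundle cohomology on $X = \mathbf{P}(1,1,1,3)$ and on $Z \cong \mathbf{P}^2$.

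For (1) and (2): applying $\text{Hom}^*(\mathcal{O}_X,-)$ to the two sequences and using parts (1), (2), (3) of the previous lemma, the terms $\text{Hom}^*(\mathcal{O}_X,\mathcal{O}_X^3) \cong k^3$, $\text{Hom}^*(\mathcal{O}_X,\mathcal{O}_Z(1)) \cong k^3$, $\text{Hom}^*(\mathcal{O}_X,\mathcal{O}_X^6) \cong k^6$ and $\text{Hom}^*(\mathcal{O}_X,\mathcal{O}_Z(2)) \cong k^6$ are all concentrated in degree $0$. Hence $\text{Hom}^*(\mathcal{O}_X,F) \cong 0$ and $\text{Hom}^*(\mathcal{O}_X,G) \cong 0$ will follow once I check that the induced maps $H^0(X,\mathcal{O}_X)^3 \to H^0(Z,\mathcal{O}_Z(1))$ and $H^0(X,\mathcal{O}_X)^6 \to H^0(Z,\mathcal{O}_Z(2))$ are isomorphisms. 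These maps send the standard basis vectors to the restrictions to $Z$ of the linear forms $x_0,x_1,x_2$, respectively of the six quadratic monomials in $x_0,x_1,x_2$; since the restriction maps from $H^0(X,\mathcal{O}_X(1))$ and $H^0(X,\mathcal{O}_X(2))$ to $H^0(Z,\mathcal{O}_Z(1))$ and $H^0(Z,\mathcal{O}_Z(2))$ are isomorphisms (equal dimensions, and no form of degree $1$ or $2$ is divisible by the degree-$3$ equation of $Z$), these restrictions are bases, so the maps are isomorphisms.

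For (3) and (4): here $\text{Hom}^*(\mathcal{O}_X(3),\mathcal{E}) \cong H^*(X,\mathcal{E}(-3))$. First I would record $H^*(X,\mathcal{O}_X(-3)) = 0$: $H^0 = 0$ since $\mathcal{O}_X(-3)$ has no sections, $H^1 = H^2 = 0$ by the vanishing of intermediate cohomology of line bundles on a weighted projective space, and $H^3(X,\mathcal{O}_X(-3)) \cong H^0(X,\mathcal{O}_X(-3))^* = 0$ by Serre duality using $K_X \cong \mathcal{O}_X(-6)$. Next, $\text{Hom}^*(\mathcal{O}_X(3),\mathcal{O}_Z(j)) \cong H^*(\mathbf{P}^2,\mathcal{O}_{\mathbf{P}^2}(j-3))$, which vanishes for $j = 1$ and $j = 2$ since these bundles are $\mathcal{O}_{\mathbf{P}^2}(-2)$ and $\mathcal{O}_{\mathbf{P}^2}(-1)$. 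Applying $\text{Hom}^*(\mathcal{O}_X(3),-)$ to the two defining sequences and using these vanishings, the long exact sequences yield $\text{Hom}^*(\mathcal{O}_X(3),F) \cong 0$ and $\text{Hom}^*(\mathcal{O}_X(3),G) \cong 0$.

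I do not expect a serious obstacle. The only point deserving care is the identification in (1)--(2) of the connecting map on global sections with evaluation against the monomials, i.e., the assertion that the surjections $\mathcal{O}_X^3 \to \mathcal{O}_Z(1)$ and $\mathcal{O}_X^6 \to \mathcal{O}_Z(2)$ defining $F$ and $G$ are the natural ones; everything else is routine bookkeeping with long exact sequences and with the cohomology of line bundles on $\mathbf{P}(1,1,1,3)$ and $\mathbf{P}^2$ already used in the previous lemma.
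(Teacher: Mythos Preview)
Your proposal is correct and follows essentially the same approach as the paper: apply $\text{Hom}^*(\mathcal{O}_X,-)$ and $\text{Hom}^*(\mathcal{O}_X(3),-)$ to the defining sequences, then use the previous lemma together with the vanishing of $H^*(X,\mathcal{O}_X(-3))$ and $H^*(Z,\mathcal{O}_Z(-1))$, $H^*(Z,\mathcal{O}_Z(-2))$. The paper is terser---it simply asserts $\text{Hom}^*(\mathcal{O}_X,\mathcal{O}_X^3) \cong \text{Hom}^*(\mathcal{O}_X,\mathcal{O}_Z(1))$ without spelling out why the connecting map is an isomorphism---whereas you supply the explicit identification via the monomial basis, which is exactly the point you flag at the end.
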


\begin{proof}
(1) and (2) follow from $\text{Hom}^*(\mathcal{O}_X,\mathcal{O}_X^3) \cong \text{Hom}^*(\mathcal{O}_X,\mathcal{O}_Z(1))$ and 
$\text{Hom}^*(\mathcal{O}_X,\mathcal{O}_X^6) \cong \text{Hom}^*(\mathcal{O}_X,\mathcal{O}_Z(2))$.

(3) and (4) follow from 
$\text{Hom}^*(\mathcal{O}_X(3),\mathcal{O}_X) \cong \text{Hom}^*(\mathcal{O}_X(3),\mathcal{O}_Z(1)) 
\cong \text{Hom}^*(\mathcal{O}_X(3),\mathcal{O}_Z(2)) \cong 0$.
\end{proof}

\begin{Lem}
(1) $\text{Hom}^*(F,\mathcal{O}_X) \cong k^9$.

(2) $\text{Hom}^*(F,\mathcal{O}_Z(1)) \cong k^{18}$.

(3) $\text{Hom}^*(F,\mathcal{O}_Z(2)) \cong k^{30}$.

(4) $\text{Hom}^*(G,\mathcal{O}_X) \cong k^9$.

(5) $\text{Hom}^*(G,\mathcal{O}_Z(1)) \cong k^{24}$.

(6) $\text{Hom}^*(G,\mathcal{O}_Z(2)) \cong k^{45}$.
\end{Lem}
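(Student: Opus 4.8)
The plan is to apply the functors $\mathbf{R}\text{Hom}(-,\mathcal{M})$, for $\mathcal{M}$ running over $\mathcal{O}_X$, $\mathcal{O}_Z(1)$, $\mathcal{O}_Z(2)$, to the two defining short exact sequences
\[
0 \to F \to \mathcal{O}_X^3 \to \mathcal{O}_Z(1) \to 0, \qquad 0 \to G \to \mathcal{O}_X^6 \to \mathcal{O}_Z(2) \to 0,
\]
and to read off $\text{Hom}^*(F,\mathcal{M})$ and $\text{Hom}^*(G,\mathcal{M})$ from the resulting long exact sequences, feeding in the values already computed in the two preceding lemmas. For (1) one gets a triangle whose first term is $\mathbf{R}\text{Hom}(\mathcal{O}_Z(1),\mathcal{O}_X) \cong k^6[-1]$ and whose middle term is $\mathbf{R}\text{Hom}(\mathcal{O}_X^3,\mathcal{O}_X) \cong k^3$, concentrated in degree $0$; these sit in consecutive degrees, so the long exact sequence collapses to $0 \to k^3 \to \text{Hom}^0(F,\mathcal{O}_X) \to k^6 \to 0$ with all higher $\text{Ext}$ vanishing, giving $k^9$. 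The remaining statements are obtained the same way, using for (2) the pair $\text{Hom}^*(\mathcal{O}_Z(1),\mathcal{O}_Z(1)) \cong k \oplus k^{10}[-1]$ and $\text{Hom}^*(\mathcal{O}_X,\mathcal{O}_Z(1)) \cong k^3$ (so $0 \to k \to k^9 \to \text{Hom}^0(F,\mathcal{O}_Z(1)) \to k^{10} \to 0$, hence $k^{18}$); for (3) the pair $\text{Hom}^*(\mathcal{O}_Z(1),\mathcal{O}_Z(2)) \cong k^3 \oplus k^{15}[-1]$ and $\text{Hom}^*(\mathcal{O}_X,\mathcal{O}_Z(2)) \cong k^6$ (so $0 \to k^3 \to k^{18} \to \text{Hom}^0(F,\mathcal{O}_Z(2)) \to k^{15} \to 0$, hence $k^{30}$); for (4) the value $\text{Hom}^*(\mathcal{O}_Z(2),\mathcal{O}_X) \cong k^3[-1]$ (so $0 \to k^6 \to \text{Hom}^0(G,\mathcal{O}_X) \to k^3 \to 0$, hence $k^9$); for (5) the value $\text{Hom}^*(\mathcal{O}_Z(2),\mathcal{O}_Z(1)) \cong k^6[-1]$ (so $0 \to k^{18} \to \text{Hom}^0(G,\mathcal{O}_Z(1)) \to k^6 \to 0$, hence $k^{24}$).

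The only input not already recorded is $\text{Hom}^*(\mathcal{O}_Z(2),\mathcal{O}_Z(2))$, needed for (6). I would compute it exactly as in part (6) of the earlier $\mathcal{O}_Z(1)$ lemma: since $N_{Z/X} \cong \mathcal{O}_Z(3)$ is a line bundle, $\mathcal{H}om^*(\mathcal{O}_Z(2),\mathcal{O}_Z(2)) \cong \mathcal{O}_Z \oplus N_{Z/X}[-1] \cong \mathcal{O}_Z \oplus \mathcal{O}_Z(3)[-1]$, and taking cohomology on $Z \cong \mathbf{P}^2$ gives $\text{Hom}^*(\mathcal{O}_Z(2),\mathcal{O}_Z(2)) \cong k \oplus k^{10}[-1]$; alternatively one can use the sequence $0 \to \mathcal{O}_X(-1) \to \mathcal{O}_X(2) \to \mathcal{O}_Z(2) \to 0$. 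With this the long exact sequence for $G$ against $\mathcal{O}_Z(2)$ reads $0 \to k \to k^{36} \to \text{Hom}^0(G,\mathcal{O}_Z(2)) \to k^{10} \to 0$, hence $\text{Hom}^*(G,\mathcal{O}_Z(2)) \cong k^{45}$.

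I do not expect a genuine obstacle: in every case the term from $\mathbf{R}\text{Hom}(\mathcal{O}_X^{\oplus n},\mathcal{M})$ is concentrated in degree $0$, while the term from $\mathbf{R}\text{Hom}(\mathcal{O}_Z(j),\mathcal{M})$ lives only in degrees $0$ and $1$ with $\text{Ext}^{\ge 2} = 0$, so the long exact sequences automatically force $\text{Hom}^*(F,\mathcal{M})$ and $\text{Hom}^*(G,\mathcal{M})$ to be concentrated in degree $0$, and the dimension then follows by additivity of dimensions (equivalently, cross-checked by additivity of Euler characteristics) along the sequence. The only points needing a word of care are the injectivity of the degree-$0$ map $\text{Hom}^0(\mathcal{O}_Z(j),\mathcal{M}) \to \text{Hom}^0(\mathcal{O}_X^{\oplus n},\mathcal{M})$, which is automatic from the shape of the long exact sequence, and the separate computation of $\text{Hom}^*(\mathcal{O}_Z(2),\mathcal{O}_Z(2))$ described above.
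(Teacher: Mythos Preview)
Your proposal is correct and follows essentially the same argument as the paper: both apply $\mathbf{R}\mathrm{Hom}(-,\mathcal{M})$ to the defining short exact sequences for $F$ and $G$, plug in the values from the preceding lemmas, and read off the result from the resulting four-term exact sequences; the paper likewise states $\mathrm{Hom}^*(\mathcal{O}_Z(2),\mathcal{O}_Z(2)) \cong k \oplus k^{10}[-1]$ as the extra input for part~(6). Your added remarks on why the higher $\mathrm{Ext}$ groups vanish and why the degree-$0$ map is injective are correct and make explicit what the paper leaves implicit.
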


\begin{proof}
(1) follows from $\text{Hom}^*(\mathcal{O}_Z(1),\mathcal{O}_X) \cong k^6[-1]$ and 
$\text{Hom}^*(\mathcal{O}_X^3,\mathcal{O}_X) \cong k^3$ with an exact sequence
\[
0 \to \text{Hom}(\mathcal{O}_X^3,\mathcal{O}_X) \to \text{Hom}(F,\mathcal{O}_X) \to 
\text{Ext}^1(\mathcal{O}_Z(1),\mathcal{O}_X) 
\to 0.
\]

(2) follows from $\text{Hom}^*(\mathcal{O}_Z(1),\mathcal{O}_Z(1)) \cong k \oplus k^{10}[-1]$ and 
$\text{Hom}^*(\mathcal{O}_X^3,\mathcal{O}_Z(1)) \cong k^9$ with an exact sequence
\[
0 \to \text{Hom}(\mathcal{O}_Z(1),\mathcal{O}_Z(1)) \to \text{Hom}(\mathcal{O}_X^3,\mathcal{O}_Z(1)) 
\to \text{Hom}(F,\mathcal{O}_Z(1)) \to \text{Ext}^1(\mathcal{O}_Z(1),\mathcal{O}_Z(1)) \to 0.
\]

(3) follows from $\text{Hom}^*(\mathcal{O}_Z(1),\mathcal{O}_Z(2)) \cong k^3 \oplus k^{15}[-1]$ and 
$\text{Hom}^*(\mathcal{O}_X^3,\mathcal{O}_Z(2)) \cong k^{18}$ with an exact sequence
\[
0 \to \text{Hom}(\mathcal{O}_Z(1),\mathcal{O}_Z(2)) \to \text{Hom}(\mathcal{O}_X^3,\mathcal{O}_Z(2)) 
\to \text{Hom}(F,\mathcal{O}_Z(2)) \to \text{Ext}^1(\mathcal{O}_Z(1),\mathcal{O}_Z(2)) \to 0.
\]

(4) follows from $\text{Hom}^*(\mathcal{O}_Z(2),\mathcal{O}_X) \cong k^3[-1]$ and 
$\text{Hom}^*(\mathcal{O}_X^6,\mathcal{O}_X) \cong k^6$ with an exact sequence
\[
0 \to \text{Hom}(\mathcal{O}_X^6,\mathcal{O}_X) \to \text{Hom}(G,\mathcal{O}_X) \to 
\text{Ext}^1(\mathcal{O}_Z(2),\mathcal{O}_X) 
\to 0.
\]

(5) follows from $\text{Hom}^*(\mathcal{O}_Z(2),\mathcal{O}_Z(1)) \cong k^6[-1]$ and 
$\text{Hom}^*(\mathcal{O}_X^6,\mathcal{O}_Z(1)) \cong k^{18}$ with an exact sequence
\[
0 \to \text{Hom}(\mathcal{O}_X^6,\mathcal{O}_Z(1)) \to \text{Hom}(G,\mathcal{O}_Z(1)) \to 
\text{Ext}^1(\mathcal{O}_Z(2),\mathcal{O}_Z(1)) \to 0.
\]

(6) follows from $\text{Hom}^*(\mathcal{O}_Z(2),\mathcal{O}_Z(2)) \cong k \oplus k^{10}[-1]$ and 
$\text{Hom}^*(\mathcal{O}_X^6,\mathcal{O}_Z(2)) \cong k^{36}$ with an exact sequence
\[
0 \to \text{Hom}(\mathcal{O}_Z(2),\mathcal{O}_Z(2)) \to \text{Hom}(\mathcal{O}_X^6,\mathcal{O}_Z(2)) \to 
\text{Hom}(G,\mathcal{O}_Z(2)) \to \text{Ext}^1(\mathcal{O}_Z(2),\mathcal{O}_Z(2)) \to 0.
\]
\end{proof}

\begin{Prop}
(1) $\text{Hom}^*(F,F) \cong k^9$.

(2) $\text{Hom}^*(G,G) \cong k^9$.

(3) $\text{Hom}^*(F,G) \cong k^{24}$.

(4) $\text{Hom}^*(G,F) \cong k^3$.
\end{Prop}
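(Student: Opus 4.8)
The plan is to read off all four groups from the two defining exact sequences
\[
0\to F\to\mathcal{O}_X^3\to\mathcal{O}_Z(1)\to 0,\qquad 0\to G\to\mathcal{O}_X^6\to\mathcal{O}_Z(2)\to 0
\]
by applying $\mathbf{R}\text{Hom}$ once in each variable and reducing to the groups already found in Lemmas 3.2--3.4. First, applying $\mathbf{R}\text{Hom}(-,F)$ to the first sequence and using $\text{Hom}^*(\mathcal{O}_X,F)\cong 0$ (Lemma 3.3(1)) makes the term $\mathbf{R}\text{Hom}(\mathcal{O}_X^3,F)$ vanish, so the triangle degenerates to $\mathbf{R}\text{Hom}(F,F)\cong\mathbf{R}\text{Hom}(\mathcal{O}_Z(1),F)[1]$. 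In the same way Lemma 3.3 yields
\[
\mathbf{R}\text{Hom}(G,G)\cong\mathbf{R}\text{Hom}(\mathcal{O}_Z(2),G)[1],\quad
\mathbf{R}\text{Hom}(F,G)\cong\mathbf{R}\text{Hom}(\mathcal{O}_Z(1),G)[1],\quad
\mathbf{R}\text{Hom}(G,F)\cong\mathbf{R}\text{Hom}(\mathcal{O}_Z(2),F)[1].
\]

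Second, I would apply $\mathbf{R}\text{Hom}(\mathcal{O}_Z(a),-)$, with $a\in\{1,2\}$, to the defining sequence of the remaining bundle. This reduces each of the four cases to the long exact cohomology sequence of the morphism $\mathbf{R}\text{Hom}(\mathcal{O}_Z(a),\mathcal{O}_X^n)\to\mathbf{R}\text{Hom}(\mathcal{O}_Z(a),\mathcal{O}_Z(c))$, whose source and target are supplied by Lemma 3.2 (items (4)--(8), together with $\text{Hom}^*(\mathcal{O}_Z(2),\mathcal{O}_Z(2))\cong k\oplus k^{10}[-1]$ from the proof of Lemma 3.4(6)). In each case the source sits in cohomological degree $1$ and the target is the sum of a piece in degree $0$ and a piece in degree $1$. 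Hence $\text{Hom}(\mathcal{O}_Z(a),F)=\text{Hom}(\mathcal{O}_Z(a),G)=0$ automatically (a torsion sheaf maps to $0$ in a locally free one), and the remaining cohomology is pinned down once one knows that the induced map on $\text{Ext}^1$ is surjective; granting this, all higher $\text{Ext}$ vanish and the single surviving group has dimension $1+18-10=9$, $1+18-10=9$, $3+36-15=24$, $0+9-6=3$ for the four pairs respectively. After the shift by $[1]$ this is parts (1)--(4) of the Proposition, with everything concentrated in degree $0$.

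The only non-formal point --- and hence the main obstacle --- is the surjectivity of those four maps on $\text{Ext}^1$, i.e. the vanishing of $\text{Ext}^2_X(\mathcal{O}_Z(a),F)$ and $\text{Ext}^2_X(\mathcal{O}_Z(a),G)$. I would settle it along $Z$. Since $Z\in\vert\mathcal{O}_X(3)\vert$ is a Cartier divisor with $N_{Z/X}=\mathcal{O}_Z(3)$, the sheaf $\mathcal{O}_Z$ has the two-term Koszul resolution $0\to\mathcal{O}_X(-3)\to\mathcal{O}_X\to\mathcal{O}_Z\to 0$, so $\mathcal{E}xt^q_X(\mathcal{O}_Z(a),-)=0$ for $q\ge 2$, $\mathcal{E}xt^q_X(\mathcal{O}_Z(a),\mathcal{O}_X)=0$ for $q\ne 1$ with $\mathcal{E}xt^1_X(\mathcal{O}_Z(a),\mathcal{O}_X)\cong\mathcal{O}_Z(3-a)$, and $\mathcal{E}xt^0_X(\mathcal{O}_Z(a),\mathcal{O}_Z(c))\cong\mathcal{O}_Z(c-a)$, $\mathcal{E}xt^1_X(\mathcal{O}_Z(a),\mathcal{O}_Z(c))\cong\mathcal{O}_Z(3-a+c)$. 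Every twist occurring here ($3-a$, $c-a$, $3-a+c$, with $a,c\in\{1,2\}$) has degree $\ge -1$ on $Z\cong\mathbf{P}^2$, so its $H^1$ and $H^2$ vanish, and the local-to-global spectral sequence identifies the relevant map on $\text{Ext}^1$ with the multiplication map
\[
H^0(\mathbf{P}^2,\mathcal{O}(3-a))^{\oplus n}\longrightarrow H^0(\mathbf{P}^2,\mathcal{O}(3-a+c))
\]
against the $n$ forms of degree $c$ that cut out the surjection $\mathcal{O}_X^n\to\mathcal{O}_Z(c)$. These forms span $H^0(\mathbf{P}^2,\mathcal{O}(c))$, so the image of the multiplication map contains $H^0(\mathbf{P}^2,\mathcal{O}(3-a))\cdot H^0(\mathbf{P}^2,\mathcal{O}(c))=H^0(\mathbf{P}^2,\mathcal{O}(3-a+c))$, the last equality because $3-a\ge 1$ and the homogeneous coordinate ring of $\mathbf{P}^2$ is generated in degree $1$; this gives the required surjectivity. (For $c=1$ one may instead note that the kernel of the sheaf map is $\Omega^1_{\mathbf{P}^2}(4-a)$ and invoke Bott vanishing $H^1(\mathbf{P}^2,\Omega^1_{\mathbf{P}^2}(k))=0$ for $k\ne 0$.)
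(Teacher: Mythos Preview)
Your argument is correct and lands on exactly the same crux as the paper: the surjectivity of the maps
\[
\text{Ext}^1_X(\mathcal{O}_Z(a),\mathcal{O}_X^n)\longrightarrow \text{Ext}^1_X(\mathcal{O}_Z(a),\mathcal{O}_Z(c)),
\]
which both you and the paper ultimately identify with the multiplication map $H^0(\mathbf{P}^2,\mathcal{O}(3-a))^n\to H^0(\mathbf{P}^2,\mathcal{O}(3-a+c))$ against a basis of $H^0(\mathbf{P}^2,\mathcal{O}(c))$.

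The organization differs. The paper applies $\text{Hom}(F,-)$ (resp.\ $\text{Hom}(G,-)$) to the defining sequence and must show that $\text{Hom}(F,\mathcal{O}_X^n)\to\text{Hom}(F,\mathcal{O}_Z(c))$ is surjective; it reaches the $\text{Ext}^1$ map above via a commutative diagram built from Lemma~3.4 and then a second diagram coming from the resolution $\mathcal{O}_X(a-3)\to\mathcal{O}_X(a)\to\mathcal{O}_Z(a)$. You instead apply $\mathbf{R}\text{Hom}(-,F)$ first and use the vanishing of Lemma~3.3 to collapse $\mathbf{R}\text{Hom}(F,F)\cong\mathbf{R}\text{Hom}(\mathcal{O}_Z(1),F)[1]$, etc., so Lemma~3.4 is never invoked; the local-to-global spectral sequence then plays the role of the paper's second diagram. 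The payoff of your route is a shorter path that bypasses Lemma~3.4 entirely; the paper's route has the advantage that each step is a direct diagram chase with no spectral sequence bookkeeping. Substantively the two proofs coincide at the only nontrivial point.
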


\begin{proof}
(1) It is sufficient to prove that the natural homomorphism 
$\text{Hom}(F,\mathcal{O}_X^3) \to \text{Hom}(F,\mathcal{O}_Z(1))$ 
is surjective.
We have a commutative diagram:
\[
\begin{CD}
\text{Hom}(\mathcal{O}_X^3,\mathcal{O}_X^3) @>>> \text{Hom}(F,\mathcal{O}_X^3) @>>> 
\text{Hom}^1(\mathcal{O}_Z(1),\mathcal{O}_X^3) @>>> 0 \\
@V{\cong}VV @VVV @VVV \\
\text{Hom}(\mathcal{O}_X^3,\mathcal{O}_Z(1)) @>>> \text{Hom}(F,\mathcal{O}_Z(1)) @>>> 
\text{Hom}^1(\mathcal{O}_Z(1), \mathcal{O}_Z(1)) @>>> 0
\end{CD}
\]
Since the left vertical arrow is bijective, it is sufficient to prove that the right vertical arrow is surjective.
We have another commutative diagram:
\[
\begin{CD}
\text{Hom}(\mathcal{O}_X(-2),\mathcal{O}_X^3) @>{\cong}>> \text{Hom}^1(\mathcal{O}_Z(1),\mathcal{O}_X^3) \\
@VVV @VVV \\
\text{Hom}(\mathcal{O}_X(-2), \mathcal{O}_Z(1)) @>>> \text{Hom}^1(\mathcal{O}_Z(1),\mathcal{O}_Z(1)) @>>> 0
\end{CD}
\]
The left vertical arrow is surjective, hence we have our claim.

(2) It is sufficient to prove that the natural homomorphism 
$\text{Hom}(G,\mathcal{O}_X^6) \to \text{Hom}(G,\mathcal{O}_Z(2))$ 
is surjective.
We have a commutative diagram:
\[
\begin{CD}
\text{Hom}(\mathcal{O}_X^6,\mathcal{O}_X^6) @>>> \text{Hom}(G,\mathcal{O}_X^6) @>>> 
\text{Hom}^1(\mathcal{O}_Z(2),\mathcal{O}_X^6) 
@>>> 0 \\
@V{\cong}VV @VVV @VVV \\
\text{Hom}(\mathcal{O}_X^6,\mathcal{O}_Z(2)) @>>> \text{Hom}(G,\mathcal{O}_Z(2)) @>>> 
\text{Hom}^1(\mathcal{O}_Z(2), \mathcal{O}_Z(2)) 
@>>> 0
\end{CD}
\]
Since the left vertical arrow is bijective, it is sufficient to prove that the right vertical arrow is surjective.
We have another commutative diagram:
\[
\begin{CD}
\text{Hom}(\mathcal{O}_X(-1),\mathcal{O}_X^6) @>{\cong}>> \text{Hom}^1(\mathcal{O}_Z(2),\mathcal{O}_X^6) \\
@VVV @VVV \\
\text{Hom}(\mathcal{O}_X(-1), \mathcal{O}_Z(2)) @>>> \text{Hom}^1(\mathcal{O}_Z(2),\mathcal{O}_Z(2)) @>>> 0
\end{CD}
\]
The left vertical arrow is surjective, hence we have our claim.

(3) It is sufficient to prove that the natural homomorphism 
$\text{Hom}(F,\mathcal{O}_X^6) \to \text{Hom}(F,\mathcal{O}_Z(2))$ 
is surjective.
We have a commutative diagram:
\[
\begin{CD}
\text{Hom}(\mathcal{O}_X^3,\mathcal{O}_X^6) @>>> \text{Hom}(F,\mathcal{O}_X^6) @>>> 
\text{Hom}^1(\mathcal{O}_Z(1),\mathcal{O}_X^6) @>>> 0 \\
@V{\cong}VV @VVV @VVV \\
\text{Hom}(\mathcal{O}_X^3,\mathcal{O}_Z(2)) @>>> \text{Hom}(F,\mathcal{O}_Z(2)) @>>> 
\text{Hom}^1(\mathcal{O}_Z(1), \mathcal{O}_Z(2)) @>>> 0
\end{CD}
\]
Since the left vertical arrow is bijective, it is sufficient to prove that the right vertical arrow is surjective.
We have another commutative diagram:
\[
\begin{CD}
\text{Hom}(\mathcal{O}_X(-2),\mathcal{O}_X^6) @>{\cong}>> \text{Hom}^1(\mathcal{O}_Z(1),\mathcal{O}_X^6) \\
@VVV @VVV \\
\text{Hom}(\mathcal{O}_X(-2), \mathcal{O}_Z(2)) @>>> \text{Hom}^1(\mathcal{O}_Z(1),\mathcal{O}_Z(2)) @>>> 0
\end{CD}
\]
The left vertical arrow is surjective, hence we have our claim.

(4) It is sufficient to prove that the natural homomorphism 
$\text{Hom}(G,\mathcal{O}_X^3) \to \text{Hom}(G,\mathcal{O}_Z(1))$ 
is surjective.
We have a commutative diagram:
\[
\begin{CD}
\text{Hom}(\mathcal{O}_X^6,\mathcal{O}_X^3) @>>> \text{Hom}(G,\mathcal{O}_X^3) @>>> 
\text{Hom}^1(\mathcal{O}_Z(2),\mathcal{O}_X^3) 
@>>> 0 \\
@V{\cong}VV @VVV @VVV \\
\text{Hom}(\mathcal{O}_X^6,\mathcal{O}_Z(1)) @>>> \text{Hom}(G,\mathcal{O}_Z(1)) @>>> 
\text{Hom}^1(\mathcal{O}_Z(2), \mathcal{O}_Z(1)) 
@>>> 0
\end{CD}
\]
Since the left vertical arrow is bijective, it is sufficient to prove that the right vertical arrow is surjective.
We have another commutative diagram:
\[
\begin{CD}
\text{Hom}(\mathcal{O}_X(-1),\mathcal{O}_X^6) @>{\cong}>> \text{Hom}^1(\mathcal{O}_Z(2),\mathcal{O}_X^6) \\
@VVV @VVV \\
\text{Hom}(\mathcal{O}_X(-1), \mathcal{O}_Z(2)) @>>> \text{Hom}^1(\mathcal{O}_Z(2),\mathcal{O}_Z(2)) 
@>>> 0
\end{CD}
\]
The left vertical arrow is surjective, hence we have our claim.
\end{proof}

\begin{Rem} 
(1) There is a smooth Deligne-Mumford stack $\tilde X$ with a projection (a \lq\lq non-commutative crepant resolution'')
$\pi: \tilde X \to X = \mathbf{P}(1,1,1,3)$ which is an isomorphism over the smooth locus of $X$.
By \cite{stack} \S5, $\tilde X$ has a full exceptional collection of length $6$ consisting of invertible sheaves: 
\[
D^b(\tilde X) = \langle \mathcal{O}_{\tilde X}, \mathcal{O}_{\tilde X}(1), \mathcal{O}_{\tilde X}(2), 
\mathcal{O}_{\tilde X}(3), \mathcal{O}_{\tilde X}(4), \mathcal{O}_{\tilde X}(5) \rangle.
\]
\vskip 1pc

(2) The sheaf of $\mathcal{O}_X$-algebras $\mathcal{E}nd(F)$ has fibers isomorphic to the matrix algebra 
$M(3,k)$, 
but it seems that it is not isomorphic to $M(3,\mathcal{O}_X)$. 
We have $45 = 3^2 + 6^2$.
But we do not know whether there are simpler tilting bundles.

\vskip 1pc

(3) The locally free sheaf $F_S$ in \S1 satisfies the following commutative diagram:
\[
\begin{CD}
@. @. 0 @. 0 \\
@. @. @VVV @VVV \\
0 @>>> \mathcal{O}_S(-1) @>>> F_S @>>> \mathcal{O}_S(-1) @>>> 0 \\
@. @V=VV @VVV @VVV \\
0 @>>> \mathcal{O}_S(-1) @>>> \mathcal{O}_S^2 @>>> \mathcal{O}_S(1) @>>> 0 \\
@. @. @VVV @VVV \\
@. @. \mathcal{O}_C(1) @>=>> \mathcal{O}_C(1) \\
@. @. @VVV @VVV \\
@. @. 0 @. 0 
\end{CD}
\]
where $\mathbf{P}^1 \cong C \subset S$ is a curve at infinity.
Similarly $F$ and $G$ satisfy the following:
\[
\begin{CD}
@. @. @. 0 @. 0 \\
@. @. @. @VVV @VVV \\
0 @>>> \mathcal{O}_X(-2) @>>> \mathcal{O}_X(-1)^3 @>>> F @>>> \mathcal{O}_X(-2) @>>> 0 \\
@. @V=VV @V=VV @VVV @VVV \\
0 @>>> \mathcal{O}_X(-2) @>>> \mathcal{O}_X(-1)^3 @>>> \mathcal{O}_X^3 @>>> \mathcal{O}_X(1) 
@>>> 0 \\
@. @. @. @VVV @VVV \\
@. @. @. \mathcal{O}_Z(1) @>=>> \mathcal{O}_Z(1) \\
@. @. @. @VVV @VVV \\
@. @. @. 0 @. 0 
\end{CD}
\]
and
\[
\begin{CD}
@. @. @. 0 @. 0 \\
@. @. @. @VVV @VVV \\
0 @>>> \mathcal{O}_X(-2)^3 @>>> \mathcal{O}_X(-1)^8 @>>> G @>>> \mathcal{O}_X(-1) @>>> 0 \\
@. @V=VV @V=VV @VVV @VVV \\
0 @>>> \mathcal{O}_X(-2)^3 @>>> \mathcal{O}_X(-1)^8 @>>> \mathcal{O}_X^6 @>>> \mathcal{O}_X(2) 
@>>> 0 \\
@. @. @. @VVV @VVV \\
@. @. @. \mathcal{O}_Z(2) @>=>> \mathcal{O}_Z(2) \\
@. @. @. @VVV @VVV \\
@. @. @. 0 @. 0 
\end{CD}
\]
These sheaves are not iterated extensions (\lq\lq non-commutative deformations'') 
of a collection $(\mathcal{O}_X(-2), \mathcal{O}_X(-1))$, but higher extensions (cf. \cite{multi}, \cite{ODP}).

(4) Martin Kalck noticed in a private communication that the above calculations 
on the $45$-dimensional algebra
match up at least numerically with a computation in the cluster theory and higher Auslander-Reiten sequences. 

\end{Rem} 

%%%%%%%%%%%%%%%%%%%%%%%%%%%%%%%%%%%%%%%%%%
%%%%%%%%%%%%%%%%%%%%%%%%%%%%%%%%%%%%%%%%%%
%%%%%%%%%%%%%%%%%%%%%%%%%%%%%%%%%%%%%%%%%%

Graduate School of Mathematical Sciences, University of Tokyo,
Komaba, Meguro, Tokyo, 153-8914, Japan. 

kawamata@ms.u-tokyo.ac.jp

\end{document}